\newcommand{\Q}[0]{\mathbb{Q}}
\newcommand{\Z}[0]{\mathbb{Z}}
\newcommand{\R}[0]{\mathbb{R}}
\newcommand{\knap}[0]{R}
\newtheorem{lemma}{Lemma}
\newtheorem{theorem}{Theorem}
\newtheorem{corollary}{Corollary}
\newtheorem{definition}{Definition}
\DeclareMathOperator{\conv}{conv}
\DeclareMathOperator{\cone}{cone}
\DeclareMathOperator{\agg}{\mathcal{A}}
\title{The Aggregation Closure is Polyhedral for Packing and Covering Integer Programs}
\author[1]{Kanstantsin Pashkovich}
\author[2]{Laurent Poirrier}
\author[2]{Haripriya Pulyassary}
\affil[1]{University of Ottawa, School of Computer Science and Electrical Engineering, \protect\\
	\textit {kpashkov@uottawa.ca}}
\affil[2]{University of Waterloo, Department of Combinatorics and Optimization, \protect\\
	\textit{\{lpoirrier, hpulyassary\}@uwaterloo.ca}}
\date{\today}
\begin{document}

\maketitle

\abstract{
Recently, Bodur, Del Pia, Dey, Molinaro and Pokutta introduced the concept of \emph{aggregation cuts} for packing and covering integer programs. The aggregation \emph{closure} is the intersection of all aggregation cuts. Bodur et.\@ al.\@ studied the strength of this closure, but left open the question of whether the aggregation closure is polyhedral. In this paper, we answer this question in the positive, i.e. we show that the aggregation closure is polyhedral. Finally, we demonstrate that a generalization, the $k$-aggregation closure, is also polyhedral for all~$k$.
}

\parskip \baselineskip
\parindent 0cm

\section{Introduction}

A \emph{packing} integer programming (IP) problem optimizes a linear objective function over a set of the form
\[
P = \{ x \in \Z^n \, : \, x \geq 0 \text{ and } A x \leq b \},
\]
where $A \in \Z^{m \times n}$, $b \in \Z^m$, $A_{ij} \geq 0$ for
all $i= 1, \ldots, m$, $j=1, \ldots, n$ and
$b_i > 0$ for all $i=1, \dots, m$.
An \emph{aggregation} of $P$ is a knapsack set
\[
\knap^{\lambda} =
\{ x \in \Z^n \, : \, x \geq 0 \text{ and } \lambda^T A x \leq \lambda^T b \},
\]
where $\lambda^T A x \leq \lambda^T b$ is a nonnegative combination of the constraints $A x \leq b$, for some $\lambda \in \R^m$,
$\lambda \geq 0$. An \emph{aggregation cut} for $P$ is any inequality
that is valid for an aggregation of $P$. The aggregation \emph{closure} of $P$
is the intersection of all aggregation cuts for $P$. The aggregation closure
is defined in a similar way for \emph{covering} IPs, i.e. for problems over sets of the form
$P = \{ x \in \Z^n \, : \, x \geq 0 \text{ and } A x \geq b \}$.

The concept of aggregation was introduced
by Bodur, Del Pia, Dey, Molinaro and Pokutta in~\cite{Bodur2018}.
Like them, we formally define aggregations only for packing
and covering IPs.
Informally, though, the concept can be extended to general IPs,
in which case the resulting cuts are generally known as
knapsack cuts~\cite{Fukasawa2011}.
Knapsack cuts encompass a wide range of (overlapping) cutting plane families:
Chvatál-Gomory cuts~\cite{Gomory1958,Gomory1963,Chvatal1973},
Gomory Mixed-Integer cuts~\cite{Gomory1960},
lifted cover inequalities~\cite{Crowder1983,Wolsey1975,Zemel1978,Gu1998},
Mixed-Integer Rounding cuts~\cite{Nemhauser1988,Nemhauser1990,Marchand2001},
split cuts~\cite{Cook1990},
lift-and-project cuts~\cite{Balas1993,Balas1998,Balas2003},
group cuts~\cite{Gomory1972a,Gomory1972b,Johnson1974,Dash2008},
one-row cuts~\cite{Fukasawa2018}.
In addition, the concept of aggregation can be generalized to relaxations
$\knap^{\Lambda} =
\{ x \in \Z^n \, : \, x \geq 0 \text{ and } \Lambda^T A x \leq \Lambda^T b \}
$ where $\Lambda \in \R^{m \times k}$ and $\Lambda \geq 0$, i.e.,
multi-row relaxations of $P$. Bodur et.\@ al.\@~\cite{Bodur2018}
call such sets $k$-aggregations, and their general IP counterpart are the
so-called \emph{multi-row cuts}, which have been a prolific area of reseach
over the last decade~\cite{Conforti2014}.

We focus on packing and covering IPs,
because keeping the fixed set of constraints $x \geq 0$
in all relaxations $\knap^{\lambda}$ of $P$
is the defining feature of aggregation cuts as defined above.
Indeed, consider the sets
$\knap'^{\lambda} = \{ x \in \Z^n \, : \, \lambda^T A x \leq \lambda^T b \}$
obtained by dropping the nonnegativity requirements from $\knap^{\lambda}$.
For $\lambda \in \Q^m$, $\lambda \geq 0$, the integer hulls $\conv(\knap'^{\lambda})$
are exactly the
Chvatál-Gomory cuts of $P$.
It is thus clear that aggregation cuts dominate Chvatál-Gomory cuts,
implying that they should yield strong valid inequalities.
Interestingly, however,
Bodur et.\@ al.\@~\cite{Bodur2018} show that the aggregation closure
can be 2-approximated by intersecting the integer hulls of knapsacks
$\knap^{e_i}$ for $i=1, \ldots, m$.
Here, $e_j$ denotes the $j$-th vector in the standard basis of~$\R^m$.

Given any infinitely-generated family of cutting planes,
it is natural to ask whether its \emph{closure}, for
a polyhedral formulation, is a polyhedron again.
Examples of such polyhedral closures include
the Chvátal-Gomory closure~\cite{Schrijver1980}
and the split closure~\cite{Cook1990}.
Bodur et.\@ al.\@~\cite{Bodur2018} show that if $A$ is fully dense, i.e.,
if $A_{ij} > 0$ for all $i =1, \ldots, m$, $j= 1, \ldots, n$,
then the aggregation closure is polyhedral.
They state as an open problem the
question of whether the aggregation closure is polyhedral in the general case.

Here, we answer the latter question in the positive.
Note that we were recently made aware that an independent proof
was the subject of a poster by Del Pia, Linderoth and Zhu,
presented at the MIP 2019 workshop~\cite{DelPia2019}.

\section{Our Technique}
\label{sect:technique}

Bodur et.\@ al.\@~\cite{Bodur2018} showed that the aggregation closure is polyhedral whenever the inequalities defining the integer programming problem are dense. This result was obtained by showing that, in the dense case, all vertices of any aggregation belong to some finite set of points. Thus, there exists only finitely many possible aggregation cuts, and so the aggregation closure is an intersection of finitely many halfspaces.

We prove that the aggregation closure is a polyhedron by induction on the dimension. First, we consider inequalities that are not dense. The fact that these inequalities are not dense allows us to make use  of the inductive hypothesis. In this way, we are able to obtain a finite set of sparse ineaqualities that are valid for the aggregation closure. Second, we obtain a finite set of dense inequalities. Finally, we show that the finite set of the obtained sparse and dense inequalities is enough to define the aggregation closure, showing that the aggregation closure is polyhedral.

\section{Packing Integer Programs}
\label{sect:packing}

The first object of our study are feasible regions of packing integer programs,
i.e. sets of the form
\[
\{ x \in \Z^n \, : \, x \geq 0 \text{ and } A x \leq b \}\,,
\]
where $A\in \Z^{m\times n}$, $b\in \Z^m$ and $A_{ij} \geq 0$ and $b_i>0$ for all $i= 1, \ldots, m$, $j=1, \ldots, n$.

Let us consider the corresponding relaxation
\[
Q := \{ x \in \R^n \, : \, x \geq 0 \text{ and } A x \leq b \}\,.
\]
Observe that the zero vector lies in $Q$ so $Q$ is not empty.
Furthermore, for simplicity of exposition,
we assume that $Q$ is not trivial, i.e. $Q \neq \R^n_+$.

For every $\lambda \in \R^m_+$, we can define the following relaxation of $Q$ given by one aggregated inequality corresponding to $\lambda$
\[
Q^{\lambda} :=
\{ x \in \R^n \, : \, x \geq 0 \text{ and } \lambda^T A x \leq \lambda^T b \}.
\]
We define the aggregation closure $\agg(Q)$ of the above formulation of $Q$ as follows
\[
\agg(Q) := \bigcap_{\lambda \in \R^m_+}  \conv\{ Q^{\lambda} \cap \Z^n \}.
\]
Furthermore, we can write
$\agg(Q^\lambda): = \conv\{ Q^{\lambda} \cap \Z^n \}$, and so $\agg(Q)=\bigcap_{\lambda \in \R^m_+} \agg(Q^\lambda)$.

Our first main result is the following theorem.
\begin{theorem}\label{thm:main_packing}
For every packing integer program, the aggregation closure is a polyhedron.
\end{theorem}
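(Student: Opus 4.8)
The plan is to prove Theorem~\ref{thm:main_packing} by induction on the dimension $n$, following the outline sketched in Section~\ref{sect:technique}. The base case ($n=1$, or $n=0$) is trivial since the aggregation closure lives in a low-dimensional space and is easily seen to be polyhedral. For the inductive step, the key idea is to split the family of aggregation cuts into two groups according to whether the aggregated inequality $\lambda^T A x \leq \lambda^T b$ is \emph{dense} (all coefficients $(\lambda^T A)_j$ strictly positive) or \emph{sparse} (some coefficient $(\lambda^T A)_j$ equal to zero, which by nonnegativity of $A$ and $\lambda$ happens exactly when column $j$ of $A$ is orthogonal to $\lambda$). The dense case is already handled by Bodur et.\@ al.\@: when the aggregation is fully dense, all vertices of $\agg(Q^\lambda)$ lie in a fixed finite set, so only finitely many distinct dense aggregation cuts arise, contributing a finite set of inequalities.

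For the sparse case, I would exploit the fact that whenever some coefficient $(\lambda^T A)_j$ vanishes, the variable $x_j$ is unconstrained (apart from $x_j \geq 0$ and integrality) in $Q^\lambda$. The plan is to project out, or fix, such coordinates so that $\agg(Q^\lambda)$ decomposes along the coordinates that actually appear. Concretely, I would group the sparse aggregations by their \emph{support pattern}, i.e.\ by the set $S \subseteq \{1,\dots,n\}$ of indices $j$ with $(\lambda^T A)_j > 0$. For a fixed pattern $S$ with $|S| < n$, the inequalities generated are those of an aggregation closure of a packing IP in the lower-dimensional space $\R^S$, lifted back by leaving the coordinates outside $S$ free. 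By the inductive hypothesis, each such lower-dimensional aggregation closure is polyhedral, hence finitely generated; lifting preserves this finiteness. Since there are only finitely many support patterns $S$, the sparse aggregations altogether contribute a finite set of valid inequalities for $\agg(Q)$.

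Having produced a finite list of inequalities --- the dense ones from the Bodur et.\@ al.\@ argument and the sparse ones from the inductive hypothesis --- the final and most delicate step is to argue that this finite list actually \emph{defines} $\agg(Q)$, i.e.\ that every aggregation cut is implied by these finitely many inequalities. The containment of $\agg(Q)$ in the polyhedron cut out by the finite list is immediate since each listed inequality is valid. For the reverse containment, I would take an arbitrary aggregation cut coming from some $\lambda$ and show that it is dominated by members of the finite list: a dense $\lambda$ is covered directly, and for a sparse $\lambda$ the cut factors through the lower-dimensional closure for its support pattern $S$, which by induction is generated by the sparse inequalities already collected. The care needed here is to ensure that the lifting/restriction correspondence between $\agg(Q^\lambda)$ and the lower-dimensional closure is exact, so that no aggregation cut escapes the finite list.

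I expect the main obstacle to be precisely this reduction in the sparse case --- setting up the correct lower-dimensional packing instance whose aggregation closure captures all sparse cuts with a given support, and verifying that freeing the out-of-support variables commutes with taking the integer hull and the intersection over $\lambda$. In particular, one must check that the integer hull $\conv\{Q^\lambda \cap \Z^n\}$ of a sparse aggregation is a cylinder (a product with $\R_{+}$ in the free directions, intersected with integrality) over a lower-dimensional integer hull, so that the inductive hypothesis applies cleanly and the finiteness of generators is inherited.
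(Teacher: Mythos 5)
Your handling of the sparse aggregations is essentially sound and parallels the paper: a cut with $(\lambda^T A)_j=0$ can only use rows $i$ with $A_{ij}=0$, the corresponding integer hulls are cylinders in direction $e_j$, and the inductive hypothesis applied to the column-deleted instances yields a polyhedron (the paper's $L=\bigcap_j \agg(Q_j)$, organized per coordinate $j$ rather than per support pattern, which changes nothing essential) that contains $\agg(Q)$ and implies every cut arising from a sparse~$\lambda$.

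The dense case, however, contains a genuine gap, and it is precisely the heart of the problem. You invoke the result of Bodur et al.\@ to claim that only finitely many distinct dense aggregation cuts arise. Their result requires the \emph{matrix} $A$ to be fully dense ($A_{ij}>0$ for all $i,j$), which gives a bound $\lambda^T b/(\lambda^T A)_j \leq \max_i b_i$ on vertex coordinates that is uniform over all $\lambda$; it says nothing about individual dense aggregations of a general matrix. Indeed, take $n=m=2$ with constraints $x_1\leq 1$ and $x_2\leq 1$, and $\lambda=(\epsilon,1)$ for small $\epsilon>0$: the aggregation $\epsilon x_1+x_2\leq 1+\epsilon$ is dense, yet its integer hull has the vertex $(\lfloor 1+1/\epsilon\rfloor,0)$ and the facet $x_1+(M-1)x_2\leq M$ with $M=\lfloor 1+1/\epsilon\rfloor$, so as $\epsilon\to 0$ one obtains infinitely many distinct dense facet-defining aggregation cuts. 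Hence the finite list you posit does not exist by this route, and your final step (``a dense $\lambda$ is covered directly'') collapses with it. The paper's actual mechanism is different: it considers the set $T$ of $n$-tuples of affinely independent integer points spanning dense facets of some $\agg(Q^\lambda)$, keeps only the tuples minimal in the coordinatewise domination order --- finite by a Dickson-type argument (Lemma~\ref{lem:dominant_set_finite}, Corollary~\ref{cor:dominant_set_finite_tuples}) --- and defines $K$ by the hyperplanes through these minimal tuples. Crucially, $K$ does not dominate each dense cut individually; instead the equality $K\cap L=\agg(Q)$ is proved jointly: given $x^\star\in L\setminus\agg(Q^\lambda)$, slide $x^\star$ along a strictly positive direction $-r^\star$ until it hits a facet of $\agg(Q^\lambda)$, use the property of $L$ (Lemma~\ref{lem:point_minus_orthant}, that $x^\star-\gamma e_j$ remains in every $\agg(Q^\lambda)$ for suitable $\gamma$) to conclude this facet has all coefficients strictly positive, and then replace the integer tuple spanning it by a dominated minimal tuple from $S$; domination together with positivity of the coefficients and $t^\star>0$ shows the corresponding inequality of $K$ is violated by $x^\star$. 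This finite-selection-by-minimality argument, rather than any finiteness of the dense cut family itself, is the missing idea in your proposal.
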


To show that $\agg(Q)$ is polyhedral, we proceed by induction on the dimension $n$. The case $n = 1$ is straightforward. 

 Let us assume that $\agg(Q)$ is polyhedral for any packing integer program whenever $Q\subseteq \R^n$ and $n=1,\ldots, \ell$ for an integer $\ell$, $\ell\geq 1$. We will show that $\agg(Q)$ is polyhedral also for $n=\ell+1$.

For $j=1,\ldots,n$, define  
\[
Q_j \; := \; \{ x \in \R^n \, : \,  x \geq 0,\, x_j=0 \text{ and } A x \leq b\}
    \; + \; \cone(e_j).
\]
\begin{lemma}
The set
\[
L := \bigcap_{j = 1, \ldots, n} \agg(Q_j)
\]
is polyhedral.
\end{lemma}
\begin{proof}
For every $j=1,\ldots, n$, let us define
\[
Q'_j:=\{ x \in \R^{n-1} \, : \,  x \geq 0 \text{ and } A'x \leq b\}\,,
\]
where $A'\in \R^{m\times(n-1)}$ is obtained from $A\in \R^{m\times n}$ by dropping $j$-th column. Note that
\[
Q_j=\R_+\times Q'_j\,,
\]
and hence
\[
\agg(Q_j)=\R_+\times \agg(Q'_j).
\]
By the inductive hypothesis $\agg(Q'_j)$ is polyhedral for every $j=1,\ldots, n$.
Therefore, $\agg(Q_j)$ is a polyhedron for every $j=1,\ldots, n$, showing that $L$ is an intersection of a finite number of polyhedra. Thus $L$ is also polyhedral.
\end{proof}

Observe that for any packing polyhedra $G', G'' \subseteq \R^n_+$, $G' \subseteq G''$
implies $\agg(G') \subseteq \agg(G'')$. In particular, we have
$Q_j \supseteq Q$ for all $j = 1, \ldots, n$, hence
$L \supseteq \agg(Q)$.
\begin{lemma}\label{lem:point_minus_orthant}
For every point $\tilde{x} \in L$ and for every $j = 1, \ldots, n$ there exists $\gamma\in \R_+$ such that for every $\lambda \in \R^m_+$, we have
\[
\tilde{x} - \gamma e_j \in \agg( Q^{\lambda} )\,.
\]
\end{lemma}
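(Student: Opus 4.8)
The plan is to exhibit a single explicit value of $\gamma$ that works simultaneously for all $\lambda \in \R^m_+$, namely $\gamma := \tilde x_j$, and to certify membership by lifting feasible integer points from dimension $n-1$ back up to dimension $n$ by padding a zero into coordinate $j$. The point of choosing to zero out the coordinate entirely is that this decouples the construction from $\lambda$.

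First I would read off what $\tilde x \in L \subseteq \agg(Q_j)$ gives at coordinate $j$. By the previous lemma $\agg(Q_j) = \R_+ \times \agg(Q'_j)$, with the $\R_+$ factor sitting in coordinate $j$; hence $\tilde x_j \geq 0$ and the vector $\bar x$ obtained from $\tilde x$ by deleting its $j$-th entry lies in $\agg(Q'_j) \subseteq \R^{n-1}_+$. I will take $\gamma := \tilde x_j$, which is in $\R_+$ as required.

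Next, fix an arbitrary $\lambda \in \R^m_+$, and let $A'$ be $A$ with its $j$-th column deleted, so that the aggregation of $Q'_j$ for this $\lambda$ is $(Q'_j)^{\lambda} = \{ y \in \R^{n-1} : y \geq 0,\ \lambda^T A' y \leq \lambda^T b \}$. Since $\bar x \in \agg(Q'_j) \subseteq \conv\big((Q'_j)^{\lambda} \cap \Z^{n-1}\big)$, I can write $\bar x = \sum_s \mu_s y^s$ as a finite convex combination of integer points $y^s \in \Z^{n-1}_+$ with $\lambda^T A' y^s \leq \lambda^T b$. The key observation is that inserting a $0$ in coordinate $j$ turns each $y^s$ into a point $\hat y^s \in \Z^n_+$ that is feasible for the full aggregated inequality: the $j$-th column of $A$ is multiplied by $0$, so $\lambda^T A \hat y^s = \lambda^T A' y^s \leq \lambda^T b$, i.e.\ $\hat y^s \in Q^{\lambda} \cap \Z^n$.

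Finally I would reassemble the convex combination with the same weights, giving $\sum_s \mu_s \hat y^s \in \conv(Q^{\lambda} \cap \Z^n) = \agg(Q^{\lambda})$. By construction this combination has $j$-th coordinate $0$ and agrees with $\tilde x$ in every other coordinate, so it equals $\tilde x - \tilde x_j e_j = \tilde x - \gamma e_j$; the degenerate case $\lambda = 0$ (where $Q^{\lambda} = \R^n_+$) is subsumed, since the lifted points are nonnegative. The one place where care is needed — and the natural spot for the argument to break — is the uniformity in $\lambda$ demanded by the quantifier order $\exists \gamma\ \forall \lambda$: a priori, different aggregations might seem to force different amounts of decrease. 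Committing to $\gamma = \tilde x_j$, which pushes coordinate $j$ all the way to $0$, removes this difficulty, because the padded points $\hat y^s$ stay feasible for every aggregated inequality at once. (As a byproduct, since this holds for all $\lambda$, we in fact obtain $\tilde x - \gamma e_j \in \agg(Q)$.)
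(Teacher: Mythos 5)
Your proof is correct and follows essentially the same route as the paper: both take $\gamma = \tilde{x}_j$, use $\agg(Q_j) = \R_+ \times \agg(Q'_j)$ to place the truncated point in $\agg(Q'_j)$, and conclude the stronger fact that $\tilde{x} - \gamma e_j \in \agg(Q)$. The only difference is presentational — you verify membership in each $\agg(Q^\lambda)$ by explicitly zero-padding the integer points of a convex combination, whereas the paper compresses this step into the monotonicity observation that the face $\{x \in Q : x_j = 0\}$ embeds into $Q$, so $\agg(Q'_j)\times\{0\} \subseteq \agg(Q)$.
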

\begin{proof}
The proof follows from definition of $L$. In particular, given a point $\tilde{x}\in L$ and $j = 1, \ldots, n$, we can take $\gamma=\tilde{x}_j$.
Since $\tilde{x} \in \agg(Q_j)$, we have $\tilde{x} - \gamma e_j \in \agg(Q'_j)\times\{0\}$.
At which point $Q'_j \subseteq Q$ yields $\agg(Q'_j)\times\{0\} \subseteq \agg(Q)$
thus $\tilde{x} - \gamma e_j \in \agg(Q)$.
\end{proof}
We proceed by introducing a standard concept of \emph{domination} for points in $\R^n$.
\begin{definition}
We say that $x\in \R^n$ \emph{dominates} $y\in \R^n$, denoted by $x\succeq y$, if $x_j\geq y_j$ for every $j=1,\ldots,n$. Moreover, if $x\succeq y$ but $x\neq y$, we say that $x$ strictly dominates $y$ and denote it by $x\succ y$.
\end{definition}

\begin{lemma}\label{lem:dominant_set_finite}
Let $S$ be a set of points in $\Z^n_+$, such that there exists no two points $x,y\in S$ with $x\succ y$. Then $S$ is finite.
\end{lemma}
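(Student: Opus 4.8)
The plan is to recognize this statement as Dickson's Lemma, i.e.\ the fact that $(\Z^n_+, \succeq)$ is a well-quasi-order, and to give a self-contained proof by induction on $n$. The antichain hypothesis on $S$ --- no two points with $x \succ y$ --- says precisely that $S$ contains no pair comparable under $\succeq$ other than equal points. I therefore argue by contradiction: if $S$ were infinite I would exhibit two \emph{distinct} points $x^{(i)}, x^{(j)} \in S$ with $x^{(i)} \preceq x^{(j)}$; since they are distinct this forces $x^{(j)} \succ x^{(i)}$, contradicting the hypothesis.

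Assuming $S$ infinite, I first enumerate a sequence of pairwise distinct points $x^{(1)}, x^{(2)}, \ldots$ drawn from $S$. The heart of the argument is then the following claim: every infinite sequence in $\Z^n_+$ admits an infinite subsequence that is non-decreasing in every coordinate. Granting the claim, the first two terms of such a subsequence supply the comparable pair $x^{(i)} \preceq x^{(j)}$ with $i < j$ needed above, completing the contradiction.

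I would prove the claim by induction on $n$. For $n=1$ the sequence lies in the well-ordered set $\Z_+$: if its values are bounded, some value repeats infinitely often and yields a constant (hence non-decreasing) subsequence; otherwise the values are unbounded and an increasing subsequence can be extracted greedily. Either way an infinite non-decreasing subsequence exists. For the inductive step, given a sequence in $\Z^{n}_+$, I first apply the $n=1$ case to the first coordinates to extract a subsequence that is non-decreasing in coordinate $1$, and then apply the inductive hypothesis to the last $n-1$ coordinates of that subsequence to extract a further subsequence non-decreasing in coordinates $2, \ldots, n$. The resulting subsequence is non-decreasing in all $n$ coordinates.

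The argument is classical and I expect no serious obstacle; the only points requiring care are the bookkeeping of the nested subsequence extraction in the inductive step, and the observation that distinctness of the points is what upgrades the weak comparability $x^{(i)} \preceq x^{(j)}$ to the strict domination $x^{(j)} \succ x^{(i)}$ that actually contradicts the antichain hypothesis.
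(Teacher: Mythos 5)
Your proof is correct, but it follows a genuinely different route from the paper's. The paper also inducts on $n$, but works directly with the set rather than with sequences: fix any point $x'\in S$; since no other point of $S$ may dominate $x'$, every $x\in S\setminus\{x'\}$ satisfies $x_j=t$ for some coordinate $j$ and some $t\in\{0,\ldots,x'_j-1\}$, so $S$ is covered by $\{x'\}$ together with the finitely many slices $\{x\in S : x_j=t\}$, each of which is (after projecting out coordinate $j$) an antichain in $\Z^{n-1}_+$ and hence finite by the inductive hypothesis. Your argument instead proves the stronger classical fact behind Dickson's Lemma, namely that $(\Z^n_+,\succeq)$ is a well-quasi-order: every infinite sequence admits an infinite subsequence non-decreasing in \emph{every} coordinate, from which finiteness of antichains follows by contradiction, using (as you correctly note) distinctness of the two extracted points to upgrade $x^{(i)}\preceq x^{(j)}$ to $x^{(j)}\succ x^{(i)}$. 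Both inductive steps are sound; your one-dimensional case split (bounded: pigeonhole gives a constant subsequence; unbounded: greedy extraction of an increasing one) and the nested subsequence extraction are handled correctly, and enumerating an infinite sequence of distinct points from $S$ is unproblematic since $\Z^n_+$ is countable. Comparing the two: the paper's slicing proof is finitary --- no infinite subsequence extraction, no contradiction argument --- and exhibits an explicit cover of $S$ by $1+\sum_j x'_j$ lower-dimensional antichains; your proof is slightly heavier machinery for this one lemma but delivers the reusable monotone-subsequence property, which is strictly more information than the mere finiteness of antichains that the paper needs (both approaches extend equally easily to the paper's corollary on $n$-tuples, since a tuple antichain is just an antichain in $\Z^{n\cdot n}_+$).
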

\begin{proof}
Let us prove the statement by induction on $n$. The base case $n=1$ is straightforward, because there exists a minimum number in every non empty set of integer non-negative numbers. Hence, if $n=1$ the set~$S$ is either empty or consists of a single point.

Let us assume that the statement is true for all $n=1,\ldots, \ell$, $\ell\geq1$. We will show that the statement holds also for $n=\ell+1$. If $S$ is empty then $S$ is finite. If $S$ contains a point $x'\in \Z^n_+$, then $S$ can be represented as follows
\begin{align*}
S=\{x'\}\bigcup&\big(\cup_{t=0,\ldots, x'_1-1} \{x\in S\,:\, x_1=t \}\big)\bigcup  \big(\cup_{t=0,\ldots, x'_2-1} \{x\in S\,:\, x_2=t \}\big)\bigcup\ldots\\
	\bigcup&\big(\cup_{t=0,\ldots, x'_n-1} \{x\in S\,:\, x_n=t \}\big)\,,
\end{align*}
because no point in $S\setminus\{x'\}$ can  dominate $x'$.

 Note that for each $j=1,\ldots, n$ and $t\in \Z_+$ the set $\{x\in S\,:\, x_j=t \}$ is finite by the inductive hypothesis. Hence, $S$ is a union of a finite number of finite sets, and is thus a finite set.
\end{proof}

\begin{definition}
We say that an $n$-tuple of points $(x',x'',\ldots, x^{(n)})$, $x',x'',\ldots, x^{(n)}\in\R^n$ dominates an $n$-tuple $(y',y'',\ldots, y^{(n)})$, $y',y'',\ldots, y^{(n)}\in\R^n$, denoted by $(x',x'',\ldots, x^{(n)})\succeq (y',y'',\ldots, y^{(n)})$, if $x'\succeq y'$, $x''\succeq y''$,\ldots, $x^{(n)}\succeq y^{(n)}$. Furthermore, if $(x',x'',\ldots, x^{(n)})\succeq (y',y'',\ldots, y^{(n)})$ and $(x',x'',\ldots, x^{(n)})\neq (y',y'',\ldots, y^{(n)})$, we say that $(x',x'',\ldots, x^{(n)})$ strictly dominates $(y',y'',\ldots, y^{(n)})$ and we denote it by  $(x',x'',\ldots, x^{(n)})\succ (y',y'',\ldots, y^{(n)})$.
\end{definition}

The following corollary follows from Lemma~\ref{lem:dominant_set_finite}.

\begin{corollary}\label{cor:dominant_set_finite_tuples}
Let $S$ be a set of $n$-tuples of points in $\Z^n_+$, such that there exists no two $n$-tuples $(x',x'',\ldots, x^{(n)})\in S$ and $(y',y'',\ldots, y^{(n)})\in S$ with $(x',x'',\ldots, x^{(n)})\succ (y',y'',\ldots, y^{(n)})$. Then $S$ is finite.
\end{corollary}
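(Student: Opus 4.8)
The plan is to reduce Corollary~\ref{cor:dominant_set_finite_tuples} to Lemma~\ref{lem:dominant_set_finite} by identifying an $n$-tuple of points in $\Z^n_+$ with a single point in a higher-dimensional integer orthant. Specifically, I would define the map $\Phi$ that sends an $n$-tuple $(x', x'', \ldots, x^{(n)})$, with each $x^{(i)} \in \Z^n_+$, to the concatenated vector $\Phi(x', \ldots, x^{(n)}) := (x', x'', \ldots, x^{(n)}) \in \Z^{n^2}_+$ obtained by stacking the $n$ coordinate blocks of length $n$ into one vector of length $n^2$. This is clearly a bijection between $n$-tuples of points in $\Z^n_+$ and points of $\Z^{n^2}_+$.

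The key observation is that $\Phi$ translates the tuple-domination order into the coordinatewise domination order of Lemma~\ref{lem:dominant_set_finite}. Indeed, by the definition of tuple domination, $(x', \ldots, x^{(n)}) \succeq (y', \ldots, y^{(n)})$ holds if and only if $x^{(i)} \succeq y^{(i)}$ for every $i$, which by the definition of point domination is equivalent to $x^{(i)}_j \geq y^{(i)}_j$ for all $i, j$, i.e. exactly $\Phi(x', \ldots, x^{(n)}) \succeq \Phi(y', \ldots, y^{(n)})$ as points in $\Z^{n^2}_+$. Likewise, strict domination of tuples corresponds to strict domination of their images, since $\Phi$ is injective so the images differ precisely when the tuples do.

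Given these correspondences, I would argue as follows. Let $S$ be a set of $n$-tuples with no two tuples in strict domination relation, and consider its image $\Phi(S) \subseteq \Z^{n^2}_+$. If $\Phi(S)$ contained two points in strict domination, the preimage tuples would be in strict domination, contradicting the hypothesis on $S$. Hence $\Phi(S)$ satisfies the antichain condition of Lemma~\ref{lem:dominant_set_finite} in dimension $n^2$, so $\Phi(S)$ is finite. Since $\Phi$ is a bijection, $S = \Phi^{-1}(\Phi(S))$ is finite as well.

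I do not expect a genuine obstacle here, since the statement is essentially a repackaging of Lemma~\ref{lem:dominant_set_finite} into a product space. The only point requiring minor care is making the flattening map explicit and verifying that it preserves both the weak and strict domination orders, so that the antichain hypothesis transfers cleanly; once that is set up, finiteness is immediate from the lemma applied in dimension $n^2$.
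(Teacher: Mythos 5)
Your proposal is correct and matches the paper's intent: the paper simply states that the corollary ``follows from Lemma~\ref{lem:dominant_set_finite}'' (whose inductive proof establishes it in every dimension), and your flattening map into $\Z^{n^2}_+$ is precisely the canonical way to make that reduction explicit, with the order-preservation checks done properly. No gap; nothing further needed.
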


Let us now construct, aside from the polyhedron $L$, another polyhedron $K$. Later, we will prove that $\agg(Q)=L\cap K$ and thus show that $\agg(Q)$ is a polyhedron.

Consider the following set $T$ of $n$-tuples of points in $\Z_+^n$
\begin{align*}
T:=\{&(x',x'',\ldots, x^{(n)})\in \Z^n_+\times\Z^n_+\times\ldots\times\Z^n_+ \,:\, \\
&\text{there is  } \lambda \in \R^m_+ \text{ and } \\
&\text{there is a facet-defining inequality } a^Tx\leq \beta \text{ of  } \agg(Q^\lambda) \text{ with } a_j > 0 \text{ for all } j = 1, \ldots, n \\
&\text{such that } x',x'',\ldots, x^{(n)}\text{ are affinely independent and }\\
&a^T x'= \beta, a^T x''= \beta,\ldots,a^T x^{(n)}= \beta\}\,,
\end{align*}
and define 
\begin{align*}
S:= \{(x',x'',\ldots, x^{(n)})\in T\,:\,
&\text{ there is no }(y',y'',\ldots, y^{(n)})\in T\\
&\text{ such that }(y',y'',\ldots, y^{(n)})\prec (x',x'',\ldots, x^{(n)})\}\,.
\end{align*}
By Corollary~\ref{cor:dominant_set_finite_tuples}, $S$ is a finite set. 

Now, for each $n$-tuple $(x',x'',\ldots, x^{(n)})$ in $S$ we take the corresponding inequality to define $K$ as follows
\begin{align*}
K:= \{ x\in \R^n\,:\, a^T x\leq \beta, & \text{ for all } a\in \R^n \text{ and } \beta=1 \\ 
&\text{ such that } a^T x'= \beta, a^T x''= \beta,\ldots,a^T x^{(n)}= \beta\\
&\text{ for some }(x',x'',\ldots, x^{(n)})\in S\}\,.
\end{align*}
Since $S$ is finite and all $n$-tuples in $S$ are formed by affinely independent points, $K$ is a polyhedron.

\begin{lemma}
We have $K\cap L=\agg(Q)$.
\end{lemma}
\begin{proof}
As mentioned earlier, we have $\agg(Q)\subseteq L$.
Furthermore, since $K$ is formed by considering inequalities valid for $\agg(Q^\lambda)$ for some $\lambda\in \R^m_+$, we have $\agg(Q)\subseteq K$. Therefore, $\agg(Q)\subseteq K\cap L$, so it is enough to prove  $K\cap L\subseteq \agg(Q)$. Let us assume that there exists $x^\star$ such that $x^\star\not\in \agg(Q)$ and $x^\star \in L$, and let us show that in this case $x^\star \not\in K$.

Since $x^\star$ does not belong to $\agg(Q)$, we have that $x^\star$ does not belong to $\agg(Q^\lambda)$ for some $\lambda\in \R^m_+$.
By Lemma~\ref{lem:point_minus_orthant} and the convexity of $\agg(Q^\lambda)$, for each $r \in \R^n_+ \setminus \{ 0 \}$, we have that the ray $x^\star - \cone(r)$ intersects $\agg(Q^\lambda)$.
In particular, if we choose $r^\star\in \R^n_+$, where $r^\star_j>0$ for all $j=1,\ldots,n$, then there exists $t^\star\in \R$, $t^\star>0$ such that $x^\star-t^\star r^\star$ lies in a facet of $\agg(Q^\lambda)$, where the corresponding facet-defining inequality is violated by $x^\star$.
Consider the corresponding facet-defining inequality $a^T x\leq \beta$ of $\agg(Q^\lambda)$. This inequality has no coefficient equal to zero. Indeed, if for some $j \in [n]$ we have $a_j=0$ then we get $a^T(x^\star -\gamma e_j)=a^T x^\star>\beta$ and so $x^\star-\gamma e_j\not \in \agg(Q^\lambda)$ for every $\gamma$, contradicting Lemma~\ref{lem:point_minus_orthant}. 

Since $\agg(Q^\lambda)$ is integral, there exists an $n$-tuple $(x',x'',\ldots, x^{(n)})$ where $x', x'',\ldots, x^{(n)}\in \Z^n_+$ are affinely independent, $a^T x'= \beta$, $a^T x''= \beta$, \ldots, $a^T x^{(n)}= \beta$ and $x^\star-t^\star r^\star$ is a convex combination of  the points $x'$, $x''$, \ldots, $x^{(n)}$. Hence, $(x',x'',\ldots, x^{(n)})$ belongs to the set $T$ and $x^\star-t^\star r^\star=\mu_1 x'+\mu_2 x''+\ldots \mu_n x^{(n)}$ for some $\mu_1$, $\mu_2$, \ldots, $\mu_n\in \R_+$ such that $\mu_1+\mu_2+\ldots \mu_n=1$.

Thus, there exists an $n$-tuple $(y',y'',\ldots, y^{(n)})$ in $S$ such that $(y',y'',\ldots, y^{(n)})\preceq (x',x'',\ldots, x^{(n)})$. Then, consider the inequality $a'^T x\leq \beta'$, where $\beta'=1$, defined by the tuple $(y',y'',\ldots, y^{(n)})$. The point $\mu_1 y'+\mu_2 y''+\ldots \mu_n y^{(n)}$ satisfies the inequality $a'^T x\geq \beta'$ at equality. Furthermore, 
\[
\mu_1 y'+\mu_2 y''+\ldots \mu_n y^{(n)}\preceq \mu_1 x'+\mu_2 x''+\ldots \mu_n x^{(n)}=x^\star-t^\star r^\star\,
\]
showing that $x^\star$ violates $a'^T x\leq \beta'$, since $a'_j>0$, $r^\star_j>0$ for all $j=1,\ldots,n$ and $t^\star>0$. Therefore, $x^\star$ does not belong to $K$.
\end{proof}

\bigskip
Given $k\in \Z$, $k>0$, for every $\Lambda \in \R^{m\times k}_+$ we can define the following relaxation of $Q$ given by $k$ aggregated inequalities corresponding to $\Lambda$
\[
Q^{\Lambda} :=
\{ x \in \R^n \, : \, x \geq 0 \text{ and } \Lambda^T A x \leq \Lambda^T b \}.
\]
We define the $k$-aggregation closure $\agg_k(Q)$ of the above formulation of $Q$ as follows
\[
\agg_k(Q) := \bigcap_{\Lambda \in \R^{m\times k}_+}  \conv\{ Q^{\Lambda} \cap \Z^n \}.
\]
The proof of Theorem~\ref{thm:main_packing_k} below  is fully analogous to the proof of Theorem~\ref{thm:main_packing}.
\begin{theorem}\label{thm:main_packing_k}
For every packing integer program, the $k$-aggregation closure is a polyhedron.
\end{theorem}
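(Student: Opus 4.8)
The plan is to run, for $\agg_k$, the very same induction on the dimension $n$ that proved Theorem~\ref{thm:main_packing}, systematically replacing a single multiplier $\lambda\in\R^m_+$ by a multiplier matrix $\Lambda\in\R^{m\times k}_+$, the one-row relaxation $Q^\lambda$ by the $k$-row relaxation $Q^\Lambda$, and the operator $\agg$ by $\agg_k$. The base case $n=1$ is again immediate, and we assume $\agg_k(Q)$ is polyhedral in every dimension up to $\ell$; we then treat $n=\ell+1$.

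First I would rebuild the polyhedron $L$. With $Q_j$ and $Q'_j$ defined exactly as before, the product identity $Q_j=\R_+\times Q'_j$ does not depend on the number of aggregated rows, so it still yields $\agg_k(Q_j)=\R_+\times\agg_k(Q'_j)$; the inductive hypothesis makes each $\agg_k(Q'_j)$ polyhedral, whence $L:=\bigcap_{j=1,\dots,n}\agg_k(Q_j)$ is a polyhedron. Monotonicity of $\agg_k$ under inclusion together with $Q_j\supseteq Q$ gives $\agg_k(Q)\subseteq L$, and the proof of Lemma~\ref{lem:point_minus_orthant} transfers verbatim: for $\tilde x\in L$ and each $j$ one takes $\gamma=\tilde x_j$ and uses $Q'_j\subseteq Q$ to conclude $\tilde x-\gamma e_j\in\agg_k(Q)$, and hence $\tilde x-\gamma e_j\in\agg_k(Q^\Lambda)$ for every $\Lambda\in\R^{m\times k}_+$.

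Next I would assemble the companion polyhedron $K$. The finiteness machinery, namely Lemma~\ref{lem:dominant_set_finite} and Corollary~\ref{cor:dominant_set_finite_tuples}, concerns only antichains of $n$-tuples in $\Z^n_+$ and needs no change whatsoever. I would form the set $T$ of $n$-tuples of affinely independent integer points lying on a facet $a^Tx\le\beta$, with $a_j>0$ for all $j$, of some $\agg_k(Q^\Lambda)$; take the antichain $S$ of its $\preceq$-minimal tuples, which is finite by the corollary; normalise each associated inequality so that $\beta=1$; and let $K$ be their finite intersection. The proof that $K\cap L=\agg_k(Q)$ then proceeds exactly as in the one-row case: from a point $x^\star\in L\setminus\agg_k(Q)$ one obtains some $\Lambda$ with $x^\star\notin\agg_k(Q^\Lambda)$; intersecting a strictly interior ray $x^\star-\cone(r^\star)$, with $r^\star>0$, with $\agg_k(Q^\Lambda)$ produces a violated facet which, by Lemma~\ref{lem:point_minus_orthant}, has strictly positive coefficients; integrality of $\agg_k(Q^\Lambda)$ supplies affinely independent integer points spanning that facet, giving a tuple in $T$; and a $\preceq$-smaller tuple in $S$ yields an inequality of $K$ that $x^\star$ violates, so $x^\star\notin K$.

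The single point demanding genuine verification, rather than mechanical substitution, is that every structural property of the one-row relaxation on which the argument rests persists for the multi-row relaxation $Q^\Lambda$: I must confirm that $\agg_k(Q^\Lambda)=\conv\{Q^\Lambda\cap\Z^n\}$ is again a pointed, integral polyhedron whose relevant facets carry $n$ affinely independent integer points. I expect this to be the only step needing care, and it closes for the same reason as before. Since $\Lambda\ge0$ and $A\ge0$ force $\Lambda^TA\ge0$, the set $Q^\Lambda\cap\Z^n$ is downward closed in $\Z^n_+$ and bounded in every coordinate $i$ for which column $i$ of $\Lambda^TA$ is nonzero, the remaining coordinates being free; thus $Q^\Lambda\cap\Z^n$ is the product of a finite set of integer points with a nonnegative integer orthant, and $\agg_k(Q^\Lambda)$ is the convex hull of finitely many integer points together with the corresponding coordinate rays, a pointed integral polyhedron contained in $\R^n_+$, exactly as for $Q^\lambda$. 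With this in hand the induction closes and Theorem~\ref{thm:main_packing_k} follows.
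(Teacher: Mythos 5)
Your proposal is correct and takes essentially the same route as the paper, whose entire proof of Theorem~\ref{thm:main_packing_k} is the remark that it is ``fully analogous'' to that of Theorem~\ref{thm:main_packing} --- i.e., precisely the substitution $\lambda \mapsto \Lambda$, $Q^\lambda \mapsto Q^\Lambda$, $\agg \mapsto \agg_k$ that you carry out. Your explicit verification that $\agg_k(Q^\Lambda)$ is a pointed integral polyhedron (the convex hull of a finite set of integer points times a coordinate orthant, since $\Lambda^T A \geq 0$ makes $Q^\Lambda \cap \Z^n$ a product of a finite set with free nonnegative coordinates) is the one step the paper leaves implicit, and your argument for it is sound.
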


\section{Covering Integer Programs}
\label{sect:covering}

Let us now consider the feasible regions of covering integer programs, i.e. sets of the form
\[
\{ x \in \Z^n \, : \, x \geq 0 \text{ and } A x \geq b \}\,,
\]
where $A\in \Z^{m\times n}$, $b\in \Z^m$ and $A_{ij} \geq 0$ and $b_i>0$ for all $i= 1, \ldots, m$, $j=1, \ldots, n$.

We assume that the corresponding relaxation
\[
Q := \{ x \in \R^n \, : \, x \geq 0 \text{ and } A x \geq b \}\,.
\]
is not trivial, i.e. $Q$ is neither the empty set nor $\R^n_+$.

For every $\lambda \in \R^m_+$, we define an aggregation 
\[
Q^{\lambda} :=
\{ x \in \R^n \, : \, x \geq 0 \text{ and } \lambda^T A x \geq \lambda^T b \}.
\]
We define the aggregation closure $\agg(Q)$ of the above formulation of $Q$ as follows
\[
\agg(Q) := \bigcap_{\lambda \in \R^m_+}  \conv\{ Q^{\lambda} \cap \Z^n \}.
\]
Again, we can write
$\agg(Q^\lambda) := \conv\{ Q^{\lambda} \cap \Z^n \}$, and so $\agg(Q)=\bigcap_{\lambda \in \R^m_+} \agg(Q^\lambda)$.

Our second main result follows.
\begin{theorem}\label{thm:main_covering}
For every covering integer program, the aggregation closure is a polyhedron.
\end{theorem}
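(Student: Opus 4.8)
The plan is to follow the proof of Theorem~\ref{thm:main_packing} almost verbatim, by induction on $n$ (with $n=1$ immediate), but to reverse every monotonicity step to reflect that covering relaxations are \emph{upward} closed. For each $\lambda$ the set $\agg(Q^{\lambda})=\conv(Q^{\lambda}\cap\Z^n)$ is an integral polyhedron with recession cone $\R^n_+$; its vertices are the minimal feasible integer points of $Q^{\lambda}$, and its nontrivial facets are of the form $a^Tx\ge\beta$ with $a\ge 0$. To capture the ``sparse'' cuts, for $j=1,\ldots,n$ let $I_j:=\{\,i:A_{ij}=0\,\}$ and define
\[
Q_j:=\{\,x\in\R^n:\ x\ge 0\ \text{and}\ A_i x\ge b_i\ \text{for all}\ i\in I_j\,\}.
\]
Deleting column $j$ (on which $Q_j$ does not depend) exhibits $Q_j=\R_+\times Q'_j$, where $Q'_j\subseteq\R^{n-1}$ is a covering set, so $\agg(Q_j)=\R_+\times\agg(Q'_j)$ is polyhedral by the inductive hypothesis. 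Since $Q\subseteq Q_j$, the set $L:=\bigcap_{j}\agg(Q_j)$ is a polyhedron with $\agg(Q)\subseteq L$.

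The analogue of Lemma~\ref{lem:point_minus_orthant} is a \emph{push-up} statement: for every $\tilde x\in L$ and every $j$ there is $\gamma\ge 0$ with $\tilde x+\gamma e_j\in\agg(Q)$. Indeed $\tilde x\in\agg(Q_j)$ means that the projection of $\tilde x$ forgetting coordinate $j$ is a convex combination of integer points $\bar v^{(k)}\in Q'_j$; for one large integer $M$ each $(\bar v^{(k)},M)$ lies in $Q\cap\Z^n$ (rows with $A_{ij}>0$ are satisfied because $M$ is large, the rows in $I_j$ because $\bar v^{(k)}\in Q'_j$), and the same convex combination puts $\tilde x+\gamma e_j\in\agg(Q)\subseteq\agg(Q^{\lambda})$. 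Exactly as in the packing case this yields two facts: by convexity and upward closedness, for every $r\in\R^n_+\setminus\{0\}$ the ray $\tilde x+\cone(r)$ eventually enters each $\agg(Q^{\lambda})$; and if $x^{\star}\in L$ violated a facet $a^Tx\ge\beta$ of some $\agg(Q^{\lambda})$ with $a_j=0$, then $x^{\star}+\gamma e_j$ would violate it as well, contradicting push-up. Hence every facet of $\agg(Q^{\lambda})$ that separates a point of $L$ has $a>0$, its right-hand side can be normalized to $\beta=1$ (the origin is infeasible), and, since $\{\,r\ge 0:a^Tr=0\,\}=\{0\}$, such a dense facet is a bounded polytope; the point where the ray enters is therefore a convex combination of affinely independent integer vertices lying on it.

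The remaining step---producing from these dense facets a \emph{finite} family of cuts that, together with $L$, define $\agg(Q)$---is where I expect the real difficulty, because the covering case does not mirror the packing case here. In Theorem~\ref{thm:main_packing} one replaces the tuple of facet points by a componentwise \emph{smaller} (dominated) tuple: as the cut coefficients are nonnegative, a smaller support gives a \emph{stronger} cut that remains violated, and the minimal tuples form an antichain, hence a finite set by Corollary~\ref{cor:dominant_set_finite_tuples}. For covering the direction of the inequality is reversed, so preserving a violation of $a^Tx\ge 1$ forces the replacement cut to have \emph{smaller} coefficients and therefore a componentwise \emph{larger} supporting tuple; since $\Z^n_+$ has no maximal antichains, Corollary~\ref{cor:dominant_set_finite_tuples} cannot be applied to the facet points directly.

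The route I would pursue is to run the finiteness argument in the blocker (dual) space. Let $B:=\{\,a\in\R^n_+:a^Tx\ge 1\ \text{for all}\ x\in\agg(Q)\,\}$, so that $\agg(Q)=\{\,x\ge 0:a^Tx\ge 1\ \forall a\in B\,\}$ and $\agg(Q)$ is polyhedral iff $B$ has finitely many extreme points. The dense cuts are the extreme points of $B$ interior to $\R^n_+$, whereas the extreme points on the coordinate hyperplanes are sparse and already enforced by $L$. The crucial point is that the unbounded families of dense facets---those whose normals approach a coordinate hyperplane, such as $(1/N,1)$ for the knapsack $x_1+Nx_2\ge N$---are \emph{dominated in $B$} by the very sparse cut ($x_2\ge 1$ in the example) that $L$ supplies, so they are not extreme points of $B$; only finitely many genuinely interior extreme points can survive. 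Making this precise---by encoding each interior extreme point of $B$ through the integral vertex tuple of its defining facet and invoking Corollary~\ref{cor:dominant_set_finite_tuples} on that encoding---would furnish a finite set $S$ of tuples and a polyhedron $K$ defined from their cuts. The proof then closes as in the packing case: $\agg(Q)\subseteq K\cap L$ because every cut used is valid, and $K\cap L\subseteq\agg(Q)$ because any $x^{\star}\in L\setminus\agg(Q)$ violates a dense facet, which by the domination in $B$ is witnessed by one of the finitely many cuts of $K$.
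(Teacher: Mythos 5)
Your setup (induction, the sets $\mathcal{I}_j$, $Q_j=\R_+\times Q'_j$, the polyhedron $L$, the push-up property, and the observation that any facet of $\agg(Q^\lambda)$ violated by a point of $L$ must have $a>0$ and can be normalized to $\beta=1$) matches the paper. But the step you yourself flag as ``the real difficulty'' is genuinely missing, and your proposed blocker route does not close it. Finiteness of the extreme points of $B$ interior to $\R^n_+$ is exactly what needs proof, and your plan --- ``encoding each interior extreme point of $B$ through the integral vertex tuple of its defining facet and invoking Corollary~\ref{cor:dominant_set_finite_tuples}'' --- cannot work as stated: those tuples need not form an antichain, as your own example shows (the tuples $\bigl((N,0),(0,1)\bigr)$ for $x_1+Nx_2\ge N$ form an infinite strictly ascending chain), and you give no argument that such chain members fail to be extreme in $B$; the domination by sparse cuts from $L$ is asserted only by example. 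The paper resolves this in the primal tuple space: it defines $S$ as the tuples of $T$ that are not strictly dominated by any tuple of $T$ (these \emph{maximal} tuples form an antichain, hence a finite --- possibly empty --- set by Corollary~\ref{cor:dominant_set_finite_tuples}), and then, in proving $K\cap L=\agg(Q)$, handles the case of a violated dense facet whose tuple has \emph{no} dominating member of $S$ by extracting an infinite strictly increasing chain in $T$. Because the hit point $x^\star+t^\star r^\star$ is taken in the \emph{relative interior} of the facet, all convex weights $\mu_i$ are strictly positive, so along the chain some coordinate of $\sum_i\mu_i z^{(i)}$ tends to infinity while these combinations remain on dense facets of various $\agg(Q^\zeta)$; hence some dense facet point strictly dominates $x^\star+\gamma e_j$ for the $\gamma$ of Corollary~\ref{cor:point_plus_orthant_general}, forcing $x^\star+\gamma e_j\notin\agg(Q^\zeta)$ and contradicting $x^\star\in L$. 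In short, the paper converts the possible nonexistence of maximal dominating tuples into a contradiction with the push-up property, rather than proving extreme-point finiteness of the blocker; this is the idea your sketch lacks.

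There is also a concrete error in your proof of the push-up statement: $\tilde x\in\agg(Q_j)=\R_+\times\agg(Q'_j)$ does \emph{not} imply that the projection of $\tilde x$ is a convex combination of integer points of $Q'_j$. The aggregation closure is an intersection of knapsack integer hulls and is in general a strict \emph{outer} approximation of $\conv(Q'_j\cap\Z^{n-1})$, so your lifting of integer points $\bar v^{(k)}\in Q'_j$ starts from a false premise. The paper's Lemma~\ref{lem:point_plus_orthant} avoids this by fixing $\lambda$ and using only that $\tilde x$ lies in the convex hull of the integer points satisfying the \emph{single aggregated} inequality $\sum_{i\in\mathcal{I}_j}\lambda_iA_i^Tx\ge\sum_{i\in\mathcal{I}_j}\lambda_ib_i$; lifting each such integer point by $\gamma e_j$, with the uniform $\gamma=\bigl\lceil\max_j\max_{i\notin\mathcal{I}_j}b_i/(A_i^Te_j)\bigr\rceil$, makes it satisfy every row outside $\mathcal{I}_j$ and hence the full aggregated inequality for $\lambda$. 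This repair is routine, but as written your lemma is unproved, and since both the exclusion of facets with $a_j=0$ and the chain argument above rest on it, the proposal as it stands does not constitute a proof.
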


Again, we proceed by induction on the dimension $n$. The base case $n = 1$ is immediate.
Let us assume that $\agg(Q)$ is polyhedral for any covering integer program whenever $Q\subseteq \R^n$ and $n=1,\ldots, \ell$ for an integer $\ell$, $\ell\geq 1$. We will show that $\agg(Q)$ is polyhedral also for $n=\ell+1$.

For $j=1,\ldots,n$, define  $\mathcal{I}_j := \{i \in \{1,2,\ldots,m\} : A_i e_j = 0 \}$  and
\[
Q_j := \{ x \in \R^n \, : \,  x \geq 0 \text{ and } A_i x \geq b_i \text{ for all } i \in \mathcal{I}_j \}.
\]
Here, $A_i$ denotes the $i$-th row of $A$ for $i=1,\ldots,m$.
In other words, the index set $\mathcal{I}_j$ represents the rows of $A$
that have a zero in column $j$, and $Q_j$ is a relaxation of $Q$ corresponding to those rows.

\begin{lemma}
The set
\[
L := \bigcap_{j = 1, \ldots, n} \agg(Q_j)
\]
is polyhedral.
\end{lemma}
\begin{proof}
For every $j=1,\ldots, n$, let us define
\[
Q'_j:=\{ x \in \R^{n-1} \, : \,  x \geq 0 \text{ and } A'_i x \geq b_i \text{ for all } i \in \mathcal{I}_j\}\,,
\]
where $A'_i\in \R^{n-1}$ for all $i\in\mathcal{I}_j$ is obtained from $A_i\in \R^n$ by dropping the $j$-th entry. Note that
\[
Q_j=\R_+\times Q'_j\,,
\]
and hence
\[
\agg(Q_j)=\R_+\times \agg(Q'_j).
\]
By the inductive hypothesis $\agg(Q'_j)$ is polyhedral for every $j=1,\ldots, n$.
Therefore, $\agg(Q_j)$ is polyhedral for every $j=1,\ldots, n$, showing that $L$ is also polyhedral.
\end{proof}

Observe that for any covering polyhedra $G', G'' \subseteq \R^n_+$, $G' \subseteq G''$
implies $\agg(G') \subseteq \agg(G'')$. In particular,
$Q_j$ is a relaxation of $Q$ for all $j = 1, \ldots, n$, hence
$L \supseteq \agg(Q)$.
If $Q_j = Q$ for some $j$, then since $\agg(Q_j)$ is polyhedral as shown above,
$\agg(Q)$ is polyhedral as well and we are done. Thus, we now assume
that $\mathcal{I}_j \neq \{1,2,\ldots,m\}$ for all $j=1,\ldots,n$.

\begin{lemma}\label{lem:point_plus_orthant}
There exists $\gamma\in \Z$, $\gamma > 0$ such that, for every $\tilde{x} \in L$, for every $\lambda \in \R^m_+$,  for every $j = 1, \ldots, n$,
we have
\[
\tilde{x} + \gamma e_j \in \conv\{ Q^{\lambda} \cap \Z^n \}\,.
\]
\end{lemma}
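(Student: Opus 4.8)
The plan is to exhibit a single positive integer $\gamma$ depending only on $A$ and $b$, and then to verify the membership $\tilde x + \gamma e_j \in \conv\{Q^\lambda\cap\Z^n\}$ directly for every $\tilde x\in L$, every $\lambda\in\R^m_+$, and every $j$. The natural candidate is $\gamma := \lceil \max_{i,j:\,A_{ij}>0} b_i/A_{ij}\rceil$. This is well defined because $A$ has at least one positive entry (otherwise $Ax\ge b$ with $b>0$ would be infeasible and $Q$ empty, contradicting non-triviality), and it is automatically a positive integer. The reason for this choice is that $\gamma$ dominates every ratio $\bigl(\sum_{i\notin\mathcal I_j}\lambda_i b_i\bigr)\big/\bigl(\sum_{i\notin\mathcal I_j}\lambda_i A_{ij}\bigr)$ \emph{uniformly} in $\lambda$, by the elementary mediant bound that a nonnegatively weighted average of the numbers $b_i/A_{ij}$ never exceeds their maximum.

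First I would convert membership in $L$ into an integral decomposition. Since $\tilde x\in L\subseteq\agg(Q_j)=\R_+\times\agg(Q'_j)$, the projection $\tilde x^{(j)}$ obtained by deleting coordinate $j$ lies in $\agg(Q'_j)$, and hence, by definition of the aggregation closure, in $\conv\{(Q'_j)^{\mu}\cap\Z^{n-1}\}$ for the weight vector $\mu=\lambda|_{\mathcal I_j}$, the restriction of $\lambda$ to the rows $i\in\mathcal I_j$ (those with $A_{ij}=0$). By Carath\'{e}odory I may write $\tilde x^{(j)}=\sum_k\alpha_k z_k$ as a finite convex combination of integer points $z_k\ge 0$ satisfying $\sum_{i\in\mathcal I_j}\lambda_i A'_i z_k\ge\sum_{i\in\mathcal I_j}\lambda_i b_i$. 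I then lift each $z_k$ to a point $\hat z_k\in\Z^n$ by inserting the value $\gamma$ in coordinate $j$.

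The key computation is to verify $\hat z_k\in Q^\lambda\cap\Z^n$. Splitting each $A_i\hat z_k$ into its column-$j$ contribution $A_{ij}\gamma$ and its nonnegative remainder (which, summed against $\lambda$ over $i\in\mathcal I_j$, is at least $\sum_{i\in\mathcal I_j}\lambda_i b_i$ by the previous paragraph), feasibility of $\hat z_k$ reduces to the single inequality $\gamma(\lambda^T A)_j\ge\sum_{i\notin\mathcal I_j}\lambda_i b_i$. When $(\lambda^T A)_j>0$ this is exactly the mediant bound guaranteed by our choice of $\gamma$, and when $(\lambda^T A)_j=0$ the right-hand side vanishes, since then $\lambda_i=0$ for every $i\notin\mathcal I_j$. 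Consequently $p:=\sum_k\alpha_k\hat z_k\in\conv\{Q^\lambda\cap\Z^n\}$; this $p$ agrees with $\tilde x$ in all coordinates except $j$ and has $p_j=\gamma$, i.e. $p=\tilde x+(\gamma-\tilde x_j)e_j$. Finally, since $(\lambda^T A)_j\ge 0$ the direction $e_j$ lies in $\rec(\conv\{Q^\lambda\cap\Z^n\})$, so adding the nonnegative multiple $\tilde x_j e_j$ stays in the set and yields $\tilde x+\gamma e_j\in\conv\{Q^\lambda\cap\Z^n\}$, as desired.

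I expect the main obstacle to be obtaining $\gamma$ \emph{uniformly}, independent of $\lambda$, $\tilde x$, and $j$, rather than the mere LP-feasibility $\tilde x+\gamma e_j\in Q^\lambda$, which is comparatively easy. The mediant bound is precisely what removes the dependence on $\lambda$, and the recession-cone shift by $\tilde x_j e_j$ is what upgrades LP-feasibility of the lifted combination to genuine membership in the integer hull $\conv\{Q^\lambda\cap\Z^n\}$ while absorbing the possibly fractional value $\tilde x_j$.
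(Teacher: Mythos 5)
Your proposal is correct and takes essentially the same route as the paper: the identical uniform constant $\gamma=\lceil \max_{i,j:\,A_{ij}>0} b_i/A_{ij}\rceil$, the decomposition of $\tilde{x}\in\agg(Q_j)$ into integer points satisfying the aggregated inequality over $\mathcal{I}_j$ with multipliers $\lambda$ restricted to $\mathcal{I}_j$, and a shift along $e_j$ whose feasibility for the full $\lambda$-aggregate follows from the termwise bound $\gamma\lambda_i A_{ij}\geq \lambda_i b_i$ for $i\notin\mathcal{I}_j$. The only cosmetic difference is that the paper adds $\gamma$ to the $j$-th coordinate of each decomposition point and checks each row $t\notin\mathcal{I}_j$ individually before summing, whereas you set that coordinate to exactly $\gamma$ and then recover $\tilde{x}_j$ via the (easily justified, e.g.\ by a floor/ceiling convex combination of lattice shifts) fact that $e_j$ is a recession direction of $\conv\{Q^{\lambda}\cap\Z^n\}$ --- an extra step the paper's variant avoids.
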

\begin{proof}

Define 
\[
\gamma:=\left\lceil \max_{j=1,\ldots, n} \max_{i\not\in \mathcal{I}_j} \frac{b_i}{A_i^T e_j} \right\rceil\,.
\]
Note that $\gamma$ is well-defined and is a positive number.
Our choice of $\gamma$ implies
that for any $z \in \Z^n$, if $z_j \geq \gamma$ for some $j$,
then $A_t^T z \geq b_t$ for all $t \notin \mathcal{I}_j$, i.e.,
every constraint with a nonzero coefficient in column $j$ is satisfied.

Now consider $\tilde{x} \in L$, $\lambda \in \R^m_+$ and $j = 1, \ldots, n$. Since $\tilde{x}$ lies in $L$, we have $\tilde{x}\in \agg(Q_j)$, so
\[
\tilde{x}\in \conv\{x\in \Z^n\,:\, x\geq 0\text{ and }\sum_{i\in \mathcal{I}_j} \lambda_i A_i^Tx \geq \sum_{i\in \mathcal{I}_j} \lambda_i b_i\}\,.
\]
By the definition of $\gamma$, we have
\begin{align*}
\tilde{x}+\gamma e_j \in &\conv\{x+\gamma e_j\in \Z^n\,:\, x\geq 0\text{ and }\sum_{i\in \mathcal{I}_j} \lambda_i A_i^Tx \geq  \sum_{i\in \mathcal{I}_j}\lambda_i b_i\}\subseteq
\\
&\conv\{z\in \Z^n\,:\, z\geq 0\text{ and }\sum_{i\in \mathcal{I}_j} \lambda_i A_i^T z \geq \sum_{i\in \mathcal{I}_j} \lambda_i b_i \text{ and } A_t^T z\geq b_t \text{ for all } t\not\in \mathcal{I}_j\}\subseteq
\\
&\conv\{z\in \Z^n\,:\, z\geq 0\text{ and }\sum_{i=1,\ldots, m} \lambda_i A_i^T z \geq \sum_{i=1,\ldots, m} \lambda_i b_i\}=\\
&\conv\{ Q^{\lambda} \cap \Z^n \}\,,
\end{align*}
finishing the proof.
\end{proof}

\begin{corollary}\label{cor:point_plus_orthant_general}
There exist $\gamma\in \Z$, $\gamma > 0$ such that, for every $\tilde{x} \in L$ and  for every $j = 1, \ldots, n$,
we have
\[
\tilde{x} + \gamma e_j \in \agg(Q)\,.
\]
\end{corollary}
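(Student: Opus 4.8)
The plan is to take exactly the same integer $\gamma$ produced by Lemma~\ref{lem:point_plus_orthant} and then simply intersect over all aggregation multipliers. The crucial feature of Lemma~\ref{lem:point_plus_orthant} is that the value of $\gamma$ is chosen \emph{uniformly}: it depends only on the data $A$ and $b$ (through the index sets $\mathcal{I}_j$ and the ratios $b_i / A_i^T e_j$), and not on the particular point $\tilde{x}$, the index $j$, or the multiplier $\lambda$. This uniformity is precisely what lets us pass from a statement holding ``for every $\lambda$'' to a statement about the intersection over all $\lambda$.

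Concretely, I would fix the integer $\gamma > 0$ from Lemma~\ref{lem:point_plus_orthant}, and then fix an arbitrary point $\tilde{x} \in L$ together with an arbitrary index $j \in \{1, \ldots, n\}$. Lemma~\ref{lem:point_plus_orthant} asserts that for every $\lambda \in \R^m_+$ we have $\tilde{x} + \gamma e_j \in \conv\{ Q^{\lambda} \cap \Z^n \} = \agg(Q^\lambda)$. Since this membership holds for \emph{every} choice of $\lambda \in \R^m_+$, the shifted point lies in the intersection $\bigcap_{\lambda \in \R^m_+} \agg(Q^\lambda)$, which by definition equals $\agg(Q)$. Hence $\tilde{x} + \gamma e_j \in \agg(Q)$, as claimed, and this same $\gamma$ evidently works for all $\tilde{x} \in L$ and all $j$.

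There is essentially no obstacle to overcome here: all the substantive work was already carried out in establishing Lemma~\ref{lem:point_plus_orthant}, and the corollary is obtained merely by unfolding the definition $\agg(Q) = \bigcap_{\lambda \in \R^m_+} \agg(Q^\lambda)$. The only point requiring a moment's care is the order of quantifiers --- one must confirm that a \emph{single} $\gamma$ suffices simultaneously across all $\lambda \in \R^m_+$, rather than a $\gamma$ depending on $\lambda$. This is exactly what the phrasing of Lemma~\ref{lem:point_plus_orthant} guarantees, so the passage to the intersection is legitimate and the proof is complete.
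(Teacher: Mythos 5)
Your proof is correct and matches the paper's intended argument exactly: since Lemma~\ref{lem:point_plus_orthant} places the existential quantifier on $\gamma$ outside the universal quantifier on $\lambda$, the single $\gamma$ works for all $\lambda$ simultaneously, and membership in $\agg(Q)=\bigcap_{\lambda\in\R^m_+}\agg(Q^\lambda)$ follows at once --- which is why the paper states the corollary without a separate proof.
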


Consider the following set $T$ of $n$-tuples of points in $\Z_+^n$
\begin{align*}
T:=\{&(x',x'',\ldots, x^{(n)})\in \Z^n_+\times\Z^n_+\times\ldots\times\Z^n_+ \,:\, \\
&\text{there is  } \lambda \in \R^m_+ \text{ and } \\
&\text{there is a facet-defining inequality } a^Tx\geq \beta \text{ of  }\agg(Q^\lambda)
	\text{ with } a_j > 0 \text{ for all } j = 1, \ldots, n \\
&\text{such that } x',x'',\ldots, x^{(n)}\text{ are affinely independent and }\\
&a^T x'= \beta, a^T x''= \beta,\ldots,a^T x^{(n)}= \beta\}\,,
\end{align*}
and define 
\begin{align*}
S:=\{(x',x'',\ldots, x^{(n)})\in T\,:\,
&\text{ there is no }(y',y'',\ldots, y^{(n)})\in T\\
&\text{ such that }(y',y'',\ldots, y^{(n)})\succ (x',x'',\ldots, x^{(n)})\}\,.
\end{align*}
By Corollary~\ref{cor:dominant_set_finite_tuples}, $S$ is a finite set. Note that the set $S$ can be empty.

Now, for each $n$-tuple $(x',x'',\ldots, x^{(n)})$ in $S$ we take the corresponding inequality to define $K$ as follows
\begin{align*}
K:=\{ x\in \R^n\,:\, a^T x\geq \beta & \text{ for all } a\in \R^n \text{ and } \beta=1\,\\ 
&\text{ such that } a^T x'= \beta, a^T x''= \beta,\ldots,a^T x^{(n)}= \beta\\
&\text{ for some }(x',x'',\ldots, x^{(n)})\in S\}\,.
\end{align*}
Since $S$ is finite and all $n$-tuples in $S$ are formed by affinely independent points, $K$ is a polyhedron.

\begin{lemma}
We have $K\cap L=\agg(Q)$.
\end{lemma}
\begin{proof}
As mentioned earlier, we have $\agg(Q)\subseteq L$.
Furthermore, since $K$ is formed by considering a subset of
inequalities valid for $\agg(Q^\lambda)$ for some $\lambda\in \R^m_+$, we have $\agg(Q)\subseteq K$.
Therefore, $\agg(Q)\subseteq K\cap L$, so it is enough to prove  $K\cap L\subseteq \agg(Q)$.
For the sake of contradiction, let us assume that there exists $x^\star$ such that $x^\star \in K\cap L$ but $x^\star\not\in \agg(Q)$.

Since $x^\star$ does not belong to $\agg(Q)$, we have that $x^\star$ does not belong to $\agg(Q^\lambda)$ for some $\lambda\in \R^m_+$.
By Corollary~\ref{cor:point_plus_orthant_general},  for each $r \in \R^n_+ \setminus \{ 0 \}$, we have that the ray $x^\star+ \cone(r)$ intersects $\agg(Q^\lambda)$.
Hence, there exists $r^\star\in \R^n_+$, where $r^\star_j>0$ for all $j=1,\ldots,n$, and $t^\star\in \R$, $t^\star>0$ such that $x^\star+t^\star r^\star$ lies in the relative interior of a facet of $\agg(Q^\lambda)$ violated by $x^\star$.
Let this facet-defining inequality be $a^T x\geq \beta$. This inequality has no coefficient equal to zero. Indeed, if for some $j=1,\ldots,n$ we have $a_j=0$ then we get $a^T(x^\star+\gamma e_j)=a^T x^\star<\beta$ and so $x^\star+\gamma e_j\not \in \agg(Q^\lambda)$ for every $\gamma$, contradicting  Corollary~\ref{cor:point_plus_orthant_general}. 

Since $\agg(Q^\lambda)$ is an integral polyhedron, there exists an $n$-tuple $(x',x'',\ldots, x^{(n)})$ where $x', x'',\ldots, x^{(n)}\in \Z^n_+$ are affinely independent, $a^T x'= \beta$, $a^T x''= \beta$, \ldots, $a^T x^{(n)}= \beta$ and $x^\star+t^\star r^\star$ is a strict convex combination of  the points $x'$, $x''$, \ldots, $x^{(n)}$. Hence, $(x',x'',\ldots, x^{(n)})$ belongs to the set $T$ and $x^\star+t^\star r^\star=\mu_1 x'+\mu_2 x''+\ldots \mu_n x^{(n)}$ for some $\mu_1$, $\mu_2$, \ldots, $\mu_n\in \R$ such that $\mu_1+\mu_2+\ldots \mu_n=1$ and $\mu_1, \mu_2, \ldots, \mu_n>0$.

Now, we have two cases: there exists an $n$-tuple $(y',y'',\ldots, y^{(n)})$ in $S$ such that $(y',y'',\ldots, y^{(n)})\succeq (x',x'',\ldots, x^{(n)})$  or there exists no such an $n$-tuple in $S$. 

If there exists an $n$-tuple $(y',y'',\ldots, y^{(n)})$ in $S$ such that $(y',y'',\ldots, y^{(n)})\succeq (x',x'',\ldots, x^{(n)})$, then consider the inequality $a'^T x\geq \beta'$, where $\beta'=1$, defined by the tuple $(y',y'',\ldots, y^{(n)})$. The point $\mu_1 y'+\mu_2 y''+\ldots \mu_n y^{(n)}$ satisfies the inequality $a'^T x\geq \beta'$ at equality, and moreover 
\[
\mu_1 y'+\mu_2 y''+\ldots \mu_n y^{(n)}\succeq \mu_1 x'+\mu_2 x''+\ldots \mu_n x^{(n)}=x^\star+t^\star r^\star\,
\]
showing that $x^\star$ violates $a'^T x\geq \beta'$ because $a'_j>0$, $r^\star_j>0$ for all $j=1,\ldots,n$ and $t^\star>0$.  Thus $x^\star$ does not belong to $K$, a contradiction.

If there exists no $n$-tuple $(y',y'',\ldots, y^{(n)})$ in $S$ such that $(y',y'',\ldots, y^{(n)})\succeq (x',x'',\ldots, x^{(n)})$, then there exists an infinite sequence of $n$-tuples in $T$ such that the first $n$-tuple in the sequence dominates $(x',x'',\ldots, x^{(n)})$, and every $n$-tuple in the sequence is strictly dominated by the next $n$-tuple in the sequence. Hence, there exist $j=1,\ldots, n$, $k=1,\ldots,n$ and an infinite subsequence of this sequence  of $n$-tuples in $T$  with an additional condition that in every $n$-tuple the $j$-th coordinate of the $k$-th point of an $n$-tuple  in the subsequence is strictly smaller then the $j$-th coordinate of the $k$-th point of the next $n$-tuple in the subsequence.  Consider the sequence of points in $\R^n_+$ formed by taking the point 
\[
\mu_1 z'+\mu_2 z''+\ldots \mu_n z^{(n)}\,,
\]
using the above subsequence of $n$-tuples $(z',z'',\ldots, z^{(n)})$. Since $\mu_k>0$, we have that for every $\gamma\in \R$ there exists a point $\tilde{x}$ in this sequence of points such that $\tilde{x}\succ x^\star+\gamma e_j$. By the definition of $T$, this point $\tilde{x}$ lies on a facet of $\agg(Q^\zeta)$ for some $\zeta\in \R^m_+$, where the corresponding facet-defining inequality $\tilde{a}^Tx\geq \tilde{\beta}=1$ has no  coefficient equal to zero.
This shows that $x^\star+\gamma e_j$  does not belong to $\agg(Q^\zeta)$ because $\tilde{x}\succ x^\star+\gamma e_j$, which by Corollary~\ref{cor:point_plus_orthant_general} yields $x^\star \notin L$, a contradiction.
\end{proof}

\bigskip
Given $k\in \Z$, $k>0$, for every $\Lambda \in \R^{m\times k}_+$ we can define the following relaxation of $Q$ given by $k$ aggregated inequalities corresponding to $\Lambda$
\[
Q^{\Lambda} :=
\{ x \in \R^n \, : \, x \geq 0 \text{ and } \Lambda^T A x \geq \Lambda^T b \}.
\]
Again, we define the $k$-aggregation closure $\agg_k(Q)$ of the above formulation of $Q$ as follows
\[
\agg_k(Q) := \bigcap_{\Lambda \in \R^{m\times k}_+}  \conv\{ Q^{\Lambda} \cap \Z^n \}.
\]
The proof of Theorem~\ref{thm:main_covering_k} below  is fully analogous to the proof of Theorem~\ref{thm:main_covering}.
\begin{theorem}\label{thm:main_covering_k}
For every covering integer program, the $k$-aggregation closure is a polyhedron.
\end{theorem}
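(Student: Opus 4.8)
The plan is to fix an arbitrary $k \in \Z$, $k > 0$, and prove the statement by induction on the dimension $n$, mirroring step by step the argument used for Theorem~\ref{thm:main_covering}; the inductive hypothesis is that $\agg_k(Q')$ is polyhedral for this same $k$ in every dimension up to $n-1$. The base case $n = 1$ is immediate. For the inductive step I would keep exactly the same relaxations $Q_j$ and index sets $\mathcal{I}_j$ as in the single-aggregation proof. Since $Q_j = \R_+ \times Q'_j$, one gets $\agg_k(Q_j) = \R_+ \times \agg_k(Q'_j)$, and $\agg_k(Q'_j)$ is polyhedral by the inductive hypothesis. Hence $L := \bigcap_{j = 1, \ldots, n} \agg_k(Q_j)$ is a finite intersection of polyhedra and is therefore polyhedral, while $\agg_k(Q) \subseteq L$ follows from the monotonicity of $\agg_k$ under inclusion of covering polyhedra (if some $Q_j = Q$ we are already done, so we may assume $\mathcal{I}_j \neq \{1, \ldots, m\}$ for all $j$).

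Next I would reprove the orthant statements of Lemma~\ref{lem:point_plus_orthant} and Corollary~\ref{cor:point_plus_orthant_general} with the \emph{same} constant $\gamma := \lceil \max_{j} \max_{i \notin \mathcal{I}_j} b_i / (A_i^T e_j) \rceil$. The only change is that a $k$-aggregation $Q^{\Lambda}$ now carries $k$ rows $\sum_i \Lambda_{i,l} A_i^T x \geq \sum_i \Lambda_{i,l} b_i$ for $l = 1, \ldots, k$. Fixing $\tilde{x} \in L$, $\Lambda$, and $j$, I would write $\tilde{x}$ as a convex combination of integer points feasible for the $\mathcal{I}_j$-restricted $k$-aggregation, and shift each such point by $\gamma e_j$. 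As in the original proof, the rows $i \in \mathcal{I}_j$ are unaffected by the shift because $A_i e_j = 0$, while every row $i \notin \mathcal{I}_j$ becomes satisfied since the $j$-th coordinate is at least $\gamma$; crucially this holds simultaneously for all $k$ aggregated rows, so the argument is insensitive to $k$. Thus $\tilde{x} + \gamma e_j \in \conv\{ Q^{\Lambda} \cap \Z^n \}$ for every $\Lambda$, and hence $\tilde{x} + \gamma e_j \in \agg_k(Q)$.

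Finally I would carry over the construction of $T$, $S$, and $K$, replacing the facet-defining inequalities of $\agg(Q^{\lambda})$ by those of $\agg_k(Q^{\Lambda})$. The one point that warrants explicit verification is that $\agg_k(Q^{\Lambda}) = \conv\{ Q^{\Lambda} \cap \Z^n \}$ remains an integral polyhedron: the set $Q^{\Lambda} \cap \Z^n$ lies in $\Z^n_+$, has recession cone $\R^n_+$, and by Lemma~\ref{lem:dominant_set_finite} possesses only finitely many $\succeq$-minimal points, so its convex hull is a polyhedron with a well-defined facet structure. With this in hand, $T$ is again a set of $n$-tuples of affinely independent integer points lying on facets whose coefficients are all strictly positive, $S$ is finite by Corollary~\ref{cor:dominant_set_finite_tuples}, and $K$ is a polyhedron. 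The proof that $K \cap L = \agg_k(Q)$ then runs exactly as before: assuming some $x^{\star} \in (K \cap L) \setminus \agg_k(Q)$, there is a $\Lambda$ with $x^{\star} \notin \agg_k(Q^{\Lambda})$; the orthant corollary forces the separating facet to have all-positive coefficients; and one reaches a contradiction through the same two-case domination argument. The main obstacle I anticipate is the second case, where no tuple of $S$ dominates the tuple produced on the separating facet: this requires the infinite-sequence argument producing points $\tilde{x} \succ x^{\star} + \gamma e_j$ on facets of $\agg_k(Q^{\zeta})$ for arbitrarily large shifts, contradicting $x^{\star} \in L$ via Corollary~\ref{cor:point_plus_orthant_general}. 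Here one must check that neither the extraction of the strictly increasing coordinate subsequence nor the monotonicity of the separating inequalities relied on $k = 1$, which they do not.
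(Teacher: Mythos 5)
Your proposal is correct and takes essentially the same route as the paper, which proves Theorem~\ref{thm:main_covering_k} simply by declaring the argument ``fully analogous'' to that of Theorem~\ref{thm:main_covering}; you carry out exactly that analogy, correctly identifying the only $k$-sensitive points (the simultaneous satisfaction of all $k$ aggregated rows after the $\gamma e_j$ shift, and the fact that the domination machinery for $T$, $S$, $K$ never uses $k=1$). Your explicit check that $\conv\{Q^{\Lambda}\cap\Z^n\}$ is a polyhedron via finiteness of $\succeq$-minimal points is a detail the paper leaves implicit even for $k=1$, and it is a welcome addition rather than a deviation.
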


\section*{Acknowledgements}
We would like to thank Ricardo Fukasawa for informing us that an independent proof of polyhedrality for aggregation closures, by Alberto  Del  Pia,  Jeff  Linderoth,  and  Haoran  Zhu, was the subject of a poster at the Mixed Integer Programming Workshop, 2019. We also would like to thank Haoran Zhu for pointing us to the question about the polyhedrality of $k$-aggregation closures. 

\bibliographystyle{plain}
\bibliography{polyhedra}

\end{document}